\documentclass[a4paper,12pt]{amsart}

\usepackage{amsmath,amsthm}
\usepackage{mathtools}
\usepackage{color}
\usepackage{enumitem}
\usepackage[colorlinks,citecolor=blue,urlcolor=blue]{hyperref}

\usepackage{tikz}
\usetikzlibrary{positioning}

\newtheorem{theorem}{Theorem}

\newtheorem{lemma}{Lemma}
\theoremstyle{remark}
\newtheorem*{remark}{Remark}

\newcommand{\E}{\mathbb{E}}
\newcommand{\eye}{{E_{\rm a}}}
\newcommand{\RV}{Z}

\newcommand{\reals}{\mathbb{R}}
\newcommand{\complexes}{\mathbb{C}}
\newcommand{\integers}{\mathbb{Z}}
\newcommand{\naturalnumbers}{\mathbb{N}}

\DeclareMathOperator{\supp}{supp}
\DeclareMathOperator{\Span}{span}
\DeclareMathOperator{\interior}{int}

\title{On the regularity of complex multiplicative chaos}

\author[J. Junnila]{Janne Junnila}
\address{MATHAA Institute, \'Ecole Polytechnique F\'ed\'erale de Lausanne, Station 8, 1015 Lausanne, Switzerland}
 \email{janne.junnila@epfl.ch}
 \author[E. Saksman]{Eero Saksman}
\address{University of Helsinki, Department of Mathematics and Statistics, Finland}
\email{eero.saksman@helsinki.fi}
\author[L. Viitasaari]{Lauri Viitasaari}
\address{University of Helsinki, Department of Mathematics and Statistics, Finland}
\email{lauri.viitasaari@iki.fi}

\begin{document}

\renewcommand{\Re}{\operatorname{Re}}
\renewcommand{\Im}{\operatorname{Im}}

\begin{abstract}
Denote by $\mu_\beta="\exp(\beta X)"$ the Gaussian multiplicative chaos
which is defined using a log-correlated Gaussian field $X$ on a domain
$U\subset\reals^d$. The case $\beta\in\reals$ has been studied quite
intensively, and then $\mu_\beta$ is a random measure on $U$. It is
known that $\mu_\beta$ can also be defined for complex values $\beta$
lying in certain subdomain of $\complexes$, and then the realizations of
$\mu_\beta$ are random generalized functions on $U$. In this note we
complement the results of \cite{JSW1} (where the case of purely
imaginary $\beta$ was considered) by studying the Besov-regularity of
$\mu_\beta$ and the finiteness of moments for general complex values of
$\beta$.\end{abstract}

\maketitle

\section{Introduction}

Gaussian multiplicative chaos measures on a domain $U \subset \reals^d$ are positive measures which can be formally written as
\begin{equation}\label{eq:gmc}
  \mu_\beta(dx) = e^{\beta X(x) - \frac{\beta^2}{2} \E X(x)^2} \, dx = \;\; :e^{\beta X(x)}: dx,
\end{equation}
where $\beta \in (0,\sqrt{2d})$ is a parameter and $X$ is a log-correlated Gaussian field. Here $:e^{\beta X(x)}:$ refers to the Wick exponential of the field $X$. By a log-correlated Gaussian field we mean that $X$ is a centered Gaussian distribution (generalized function) with covariance kernel of the form
\begin{equation}\label{eq:cov}
  \E X(x) X(y) = \log \frac{1}{|x-y|} + g(x,y),
\end{equation}
where $g$ is some sufficiently regular function.

GMC measures first appeared around the same time in the early 70s in two rather different contexts, first by Hoegh-Krohn \cite{Hoegh-Krohn} and then by Mandelbrot \cite{Mandelbrot}, but it was only in 1985 when Kahane built a rigorous theory of Gaussian multiplicative chaos \cite{Kahane}. As the point evaluations of log-correlated fields are not well-defined, giving a rigorous meaning to \eqref{eq:gmc} is usually done by approximating the field $X$ with some regular fields $X_\varepsilon$ and showing that the measures $e^{\beta X_\varepsilon(x) - \frac{\beta^2}{2} \E X_\varepsilon(x)^2} \, dx$ converge as $\varepsilon \to 0$. See \cite{Berestycki} for an elegant proof of existence of non-trivial measures in the case of convolution approximations and \cite{RV2} for a review of GMC measures.

Today GMC measures are encountered in various settings such as random geometry and Liouville quantum gravity \cite{AJKS,DKRV,DS,KRV,Sheffield}, random matrices \cite{BWW,Webb} and number theory \cite{SW}. In this note we are interested in the remarkable fact that $\mu_\beta$ may also be defined for certain complex values of $\beta$. For non-real values of $\beta$ the resulting object is in general not a measure but a random generalized function. It turns out that $\mu_\beta$ is well-defined and analytic in $\beta$ in the eye-shaped open domain
\[\eye = \interior \Span(\pm \sqrt{2d} \cup B(0,\sqrt{d})),\]
see Figure~\ref{fig:eye}.
This was originally observed for cascade analogues in \cite{Barral} and extended to some special log-correlated fields $X$ in \cite{AJKS}. Recently analyticity was extended to a general class of log-correlated fields in \cite{JSW2}. A related but different kind of complex chaos was studied in \cite{LRV}.

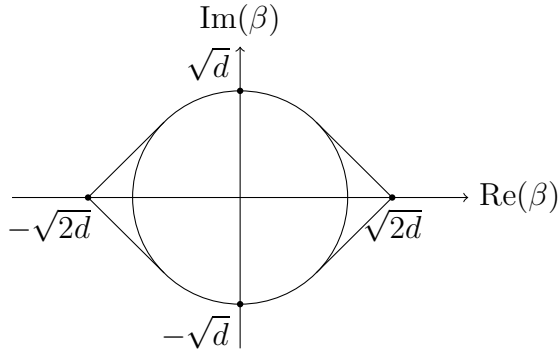
\begin{figure}
  \begin{tikzpicture}[x=1.0cm,y=1.0cm]
    \def\sqrttwo{1.4142135623730951}

    \draw[->] (-3,0) -- (3,0) node[right] {$\Re(\beta)$};
    \draw[->] (0,-2) -- (0,2) node[above] {$\Im(\beta)$};
    
    \draw (0,0) circle (\sqrttwo);
    \draw (1,1) -- (2,0) -- (1,-1);
    \draw (-1,1) -- (-2,0) -- (-1,-1);
    \fill (0,\sqrttwo) circle (1.2pt) node[anchor=south east] {$\sqrt{d}$};
    \fill (0,-\sqrttwo) circle (1.2pt) node[anchor=north east] {$-\sqrt{d}$};
    \fill (-2,0) circle (1.2pt) node[below left = 0pt and -5pt] {$-\sqrt{2d}$};
    \fill (2,0) circle (1.2pt) node[anchor=north] {$\sqrt{2d}$};
  \end{tikzpicture}
  \caption{The subcritical regime $\eye$ for $\beta$ in the complex plane.}\label{fig:eye}
\end{figure}

In a recent paper \cite{JSW1} it was rigorously verified that the scaling limit of the spin field of the XOR-Ising model is given by the real part of an imaginary multiplicative chaos distribution corresponding to the parameter value $\beta = i/\sqrt{2}$. In the same paper there was a detailed study of the regularity of these imaginary chaos distributions, some of the main results of which are stated in the next theorem.

\begin{theorem}[\cite{JSW1}]
  Assume that the domain $U$ is bounded and simply connected and that the function $g$ in the covariance \eqref{eq:cov} is continuous, integrable and bounded from above. Let $\beta \in i(0,\sqrt{d})$. Then
  \begin{enumerate}[label={\rm (\roman*)}]
    \item We have $\E |\mu(f)|^p < \infty$ for any $f \in C_c^\infty(U)$ and all $p \ge 1$.
    \item $\mu$ is almost surely not a complex measure.
    \item We have almost surely $\mu \in B_{p,q,loc}^s(U)$ when $s < -\frac{|\beta|^2}{2}$ and $\mu\notin B_{p,q,loc}^s(U)$ when $s > -\frac{|\beta|^2}{2}$.
\end{enumerate}
\end{theorem}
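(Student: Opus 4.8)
The plan is to reduce all three claims to estimates on the moments $\E|\mu(f)|^p$ of $\mu=\mu_\beta$ tested against test functions $f$, and in particular against $L^2$-normalized wavelets $\psi_{j,k}(x)=2^{jd/2}\psi(2^jx-k)$. Write $\beta=it$ with $t=|\beta|\in(0,\sqrt d)$. Using the Wick-exponential formula together with \eqref{eq:cov} and assigning charge $+it$ to $n$ of the variables and $-it$ to the remaining $n$, the even integer moments can be computed explicitly:
\[
\E|\mu(f)|^{2n}=\int_{U^{2n}}\prod_{j=1}^{2n}f_j(x_j)\prod_{j<k}|x_j-x_k|^{t^2\epsilon_j\epsilon_k}\,e^{-t^2\sum_{j<k}\epsilon_j\epsilon_k g(x_j,x_k)}\,dx,
\]
where $\epsilon_j=+1$, $f_j=f$ for $j\le n$ and $\epsilon_j=-1$, $f_j=\overline f$ for $j>n$. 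Equal-sign pairs produce the harmless factor $|x_j-x_k|^{+t^2}$, while opposite-sign pairs produce the singular factor $|x_j-x_k|^{-t^2}$; this charge repulsion is what makes the analysis work exactly up to $t^2=d$.

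For part (i), I would first show the displayed integral converges for every $n$. Since $f$ is bounded with compact support and $g$ is bounded above and continuous, finiteness reduces to $\int_{K^{2n}}\prod_{j<k}|x_j-x_k|^{t^2\epsilon_j\epsilon_k}\,dx<\infty$ for compact $K$. Simply discarding the equal-sign factors is far too crude, so the argument must retain the equal-sign repulsion; this combinatorial integrability estimate for the multi-charge kernel is the main technical obstacle of the whole theorem. I would organize it by a scale/cluster decomposition (or induction on $n$): near any cluster of colliding points the number of singular opposite-sign pairs is dominated by the number of regularizing equal-sign pairs plus the available dimension, so the net local exponent stays strictly above $-d$ precisely when $t^2<d$. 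Finiteness for general real $p\ge1$ then follows from the even-integer case by the elementary bound $|\mu(f)|^p\le 1+|\mu(f)|^{2\lceil p/2\rceil}$.

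For part (iii), I would use the wavelet characterization of $B^s_{p,q}$ in terms of the coefficients $c_{j,k}=\mu(\psi_{j,k})$. Inserting $\psi_{j,k}$ into the second-moment formula and rescaling $x\mapsto 2^{-j}x$ gives, uniformly over $k$ with $\psi_{j,k}$ supported in a fixed compact subset, $\E|c_{j,k}|^2\approx 2^{j(t^2-d)}$, the constant being $\int\!\!\int\psi(u)\overline{\psi(v)}|u-v|^{-t^2}\,du\,dv$ up to a $g$-dependent factor. The rescaled version of part (i) moreover yields the uniform higher-moment bounds $\E|c_{j,k}|^p\le C_p\,2^{jp(t^2-d)/2}$, i.e. the normalized coefficients have $p$-th moments bounded independently of $(j,k)$. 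Feeding these into the Besov norm, the scale-$j$ block contributes of order $2^{jq(s+t^2/2)}$, so $\sum_j$ converges exactly when $s<-t^2/2=-|\beta|^2/2$; bringing the expectation inside the $k$-sum by Minkowski/Jensen then gives $\E\|\mu\|_{B^s_{p,q}}^q<\infty$ on compacts and hence $\mu\in B^s_{p,q,loc}$ almost surely. For the negative direction the same count shows the scale-$j$ block has expectation of order $2^{jq(s+t^2/2)}\to\infty$ when $s>-|\beta|^2/2$, and the remaining difficulty is to upgrade this to an almost-sure statement. I would do this by a second-moment/concentration argument showing that $\sum_k|c_{j,k}|^p$ stays comparable to its mean with high probability — the coefficients at a fixed scale are only weakly dependent because of the approximate independence of the field across well-separated cells — and then invoke Borel–Cantelli along a sparse subsequence of scales to conclude $\mu\notin B^s_{p,q,loc}$ almost surely.

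Finally, for part (ii) I would deduce non-measurability from the negative direction of (iii). A complex Borel measure $\nu$ is locally finite, and its Littlewood–Paley blocks satisfy $\|\Delta_j\nu\|_{L^1}\le C\|\nu\|_{\mathrm{TV}}$ uniformly in $j$, so every complex measure lies in $B^0_{1,\infty,loc}$. Since $0>-|\beta|^2/2$, part (iii) gives $\mu\notin B^0_{1,\infty,loc}$ almost surely, whence $\mu$ is almost surely not a complex measure. (Alternatively one sees this directly from the divergence $\int_B|\mu_\varepsilon|\,dx\approx\varepsilon^{-t^2/2}|B|$ of the total variation of the mollified fields, but the Besov route is cleaner and reuses the coefficient estimates.) Throughout, the sharp threshold in all three parts is governed by the single quantity $t^2=|\beta|^2$, entering through the exponent of the two-point function $\E[\mu(x)\overline{\mu(y)}]=|x-y|^{-t^2}e^{t^2g(x,y)}$.
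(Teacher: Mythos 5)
This theorem is not proved in the present paper at all: it is quoted from \cite{JSW1} as background, so there is no in-paper proof to compare against. Your outline does, however, follow the same overall strategy as the proof in \cite{JSW1}: compute the mixed $2n$-point moments of the Wick exponentials (your formula, with equal-sign pairs contributing $|x_j-x_k|^{+t^2}$ and opposite-sign pairs $|x_j-x_k|^{-t^2}$, is correct), read off Besov regularity from moment bounds on $L^2$-normalized wavelet coefficients via the characterization \eqref{eq:besovcond}, and deduce (ii) from the negative half of (iii) through the embedding of complex (hence finite total variation) measures into $B^{0}_{1,\infty,loc}$. The scaling computation $\E|c_{j,k}|^2\asymp 2^{j(t^2-d)}$, the count of roughly $2^{jd}$ relevant coefficients per scale, the resulting threshold $s=-t^2/2$, and the reduction of general $p\ge 1$ to even integer moments are all correct.

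That said, two of your steps are genuine gaps rather than routine verifications. The first is the uniform-in-$n$ integrability of $\prod_{j<k}|x_j-x_k|^{t^2\epsilon_j\epsilon_k}$ over $K^{2n}$ when $t^2<d$. Your local power counting is right: for a balanced cluster of $2m$ colliding points the net exponent is $-t^2m$ against an available codimension $d(2m-1)$, so the binding constraint is the dipole $m=1$, which yields $t^2<d$. But power counting at a single collapse configuration does not control the full $2n$-fold integral; one must handle nested hierarchies of clusters at all scales simultaneously. In \cite{JSW1} this is done via an Onsager-type electrostatic inequality of the form $\sum_{j<k}\epsilon_j\epsilon_k\log\frac{1}{|x_j-x_k|}\le \frac12\sum_j\log\frac{1}{r_j}+Cn$ (with $r_j$ the nearest-neighbour distance), whose proof is itself a nontrivial multiscale argument; without this, or an equivalent cluster/Hepp-sector expansion with constants controlled in $n$, part (i) is not established. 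The second gap is the almost-sure lower bound in (iii): to show that $\sum_k|c_{j,k}|^p$ concentrates near its mean you need quantitative decorrelation of wavelet coefficients over well-separated cells, i.e.\ fourth-moment or covariance estimates with decay in the separation distance; this is precisely the technical content of the corresponding argument in \cite{JSW1} and cannot be settled by an appeal to ``weak dependence'' alone. Both gaps are honestly flagged in your write-up, but they are where the actual work of the theorem lies.
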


We refer to \cite[Section~2.2]{JSW1} and the references there for basic facts on function spaces, especially on Besov spaces.

In the case of moments, there is a rather striking difference between this theorem and the following corresponding result for real $\beta$.

\begin{theorem}[\cite{Kahane,Molchan,RobertVargas}]
  Let $\beta \in (0,\sqrt{2d})$ and $p \in \reals$. Then for any $f \in C_c^\infty(U)$ we have $\E |\mu(f)|^p < \infty$ if and only if $p < \frac{2d}{\beta^2}$.
\end{theorem}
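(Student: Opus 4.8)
The plan is to reduce the statement to the total mass of $\mu=\mu_\beta$ over a dyadic cube and to read off the critical exponent from the (asymptotic) self-similarity of the field. For the finiteness half, $\mu\ge0$ and $f\in C_c^\infty(U)$ give $|\mu(f)|\le\|f\|_\infty\,\mu(\supp f)$, so $\E|\mu(f)|^p<\infty$ reduces to $\E\,\mu(Q)^p<\infty$ for a cube $Q\supset\supp f$. For the divergence half it suffices to localize near a point $x_0$ with $f(x_0)\ne0$: on a small cube $Q'$ where $f$ keeps a fixed sign and $|f|\ge c>0$, the contribution $\int_{Q'}f\,d\mu$ already has a divergent $p$-th moment, and a short good-event argument shows the complementary (signed) part cannot restore integrability. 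For $p<0$ all moments are finite by the standard lower-tail estimate for $\mu(Q)$, and for $0<p\le1$ Jensen gives $\E\,\mu(Q)^p\le(\E\,\mu(Q))^p=|Q|^p<\infty$. Since $p_c:=2d/\beta^2>1$, only the range $p>1$ is genuinely at stake.

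Next I would record the multifractal scaling. Using the $\star$-scale invariance of $X$ (after a Kahane comparison that absorbs the regular part $g$ and replaces $X$ by an exactly scale-invariant reference field), one has the distributional identity $\mu(rA)\overset{d}{=}r^{d}e^{\beta\Omega_r-\frac{\beta^2}{2}\E\Omega_r^2}\mu(A)$ with $\Omega_r\sim N(0,\log(1/r))$ independent of the rescaled copy. Taking $A=Q$ yields, whenever the moment is finite, $\E\,\mu(rQ)^p=r^{\xi(p)}\E\,\mu(Q)^p$ with structure function $\xi(p)=(d+\frac{\beta^2}{2})p-\frac{\beta^2}{2}p^2$, and the elementary factorization $\xi(p)-d=-\frac{\beta^2}{2}(p-1)(p-p_c)$ pins the critical value at $p=p_c$: one has $\xi(p)>d$ precisely for $1<p<p_c$.

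For the finiteness direction I would set $m_p:=\E\,\mu(Q)^p$ and subdivide $Q$ into its $b=2^d$ dyadic children $Q_1,\dots,Q_b$. Writing $X=\bar X+\hat X$ with $\bar X$ the coarse field and $\hat X$ the scale-$2^{-1}$ fluctuation, each child contributes $\mu(Q_i)\overset{d}{\approx}2^{-(d+\beta^2/2)}e^{\beta\Omega_i}M_i$ with $M_i$ distributed as $M:=\mu(Q)$; expanding $\big(\sum_i\mu(Q_i)\big)^p$ and separating the $b$ pure powers from the mixed terms gives a recursion of the form $m_p\le 2^{d-\xi(p)}\,m_p+C_p$, where $C_p<\infty$ involves only moments of order $<p$ together with the bounded coarse field. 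Since $2^{d-\xi(p)}<1$ for $p<p_c$, this closes and bounds $m_p$. The point where I expect the real work to lie is precisely here: the children are only approximately independent and only approximately self-similar, so the argument must be run on a mollified field $X_\varepsilon$ with all constants uniform in $\varepsilon$ --- Kahane's convexity inequalities being the tool that controls the correlations and the comparison to the exactly scale-invariant model --- after which one bounds $m_p^{(\varepsilon)}\le C_p/(1-2^{d-\xi(p)})$ uniformly and passes to the limit by Fatou.

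Finally, for the divergence when $p\ge p_c$ I would run the same subdivision in reverse. For $p\ge1$ the inequality $\big(\sum_i a_i\big)^p\ge\sum_i a_i^p$ gives $m_p\ge b\,\E\,\mu(Q_1)^p=2^{d-\xi(p)}m_p$; when $p>p_c$ one has $2^{d-\xi(p)}>1$, so a finite positive $m_p$ is impossible and $\E\,\mu(Q)^p=\infty$. The borderline $p=p_c$, where $2^{d-\xi(p_c)}=1$, is the one case this crude comparison cannot decide; there I would instead invoke the power-law tail $\P(M>t)\sim c\,t^{-p_c}$ of the total mass (the Kahane--Peyri\`ere/Durrett--Liggett tail for the associated cascade), which shows that the $p_c$-th moment diverges as well and hence that finiteness holds if and only if $p<p_c$.
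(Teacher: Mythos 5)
There is nothing in the paper to compare against: this theorem is quoted from the literature (Kahane, Molchan, Robert--Vargas) as background, and the authors give no proof of it. So your proposal can only be judged against the standard arguments in those references, and in outline it reproduces them faithfully: reduction to the total mass $\mu(Q)$, the structure function $\xi(p)=(d+\tfrac{\beta^2}{2})p-\tfrac{\beta^2}{2}p^2$ with $\xi(p)-d=-\tfrac{\beta^2}{2}(p-1)(p-p_c)$, the Kahane--Peyri\`ere-type subdivision recursion $m_p\le 2^{d-\xi(p)}m_p+C_p$ for finiteness, and superadditivity for divergence. The exponent computations are correct.

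The places where the sketch stops short of a proof are exactly the ones you half-acknowledge. First, the recursion step is not a routine expansion: for non-integer $p\in(1,2)$ there is no ``separation of pure powers from mixed terms,'' and one must instead use inequalities such as $(\sum_i a_i)^p\le\sum_i a_i\,(\sum_j a_j)^{p-1}$ together with an induction on $\lceil p\rceil$; moreover the children $\mu(Q_i)$ are neither independent nor exactly self-similar, so the whole recursion has to be run on approximations with constants uniform in the regularization and with a two-sided Kahane comparison to the exactly scaling field (Gaussian constants must be added to make the covariance comparison go both ways). You name the right tools but none of this is carried out, and it is the entire content of the theorem. Second, the divergence direction for a general signed $f\in C_c^\infty(U)$ is genuinely nontrivial (cancellation between $\mu(f^+)$ and $\mu(f^-)$ must be ruled out), and your ``short good-event argument'' is not supplied; similarly, for $p<0$ and signed $f$ a lower-tail bound on $|\mu(f)|$, not just on $\mu(Q)$, is needed. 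Third, at the borderline $p=p_c$ invoking the tail asymptotic $\P(M>t)\sim ct^{-p_c}$ is not circular but is far heavier machinery than necessary; the same subdivision argument with a strict superadditivity refinement (the cross term $\E\,\mu(Q_1)^{p/2}\mu(Q_2)^{p/2}>0$) already yields a contradiction with $\E M^{p_c}<\infty$. In short: correct blueprint of the classical proof, but the analytically hard steps are deferred rather than done.
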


The goal of this paper is to prove analogous positive results for general complex $\beta$.  In \cite{JSW2} the complex  chaos was constructed as $H^{-d}_{loc}(U)$-valued random analytic function, but the regularity was not studied in detail. 

For the existence of the moments we have the following theorem.

\begin{theorem}\label{thm:moments}
  Assume that in \eqref{eq:cov} we have $g \in H_{loc}^{d + \varepsilon}(U \times U)$. Let $f \in C_c^\infty(B)$. Then
  for a given $p \ge 1$ there exists a constant $C > 0$ such that we have $\E |\mu_\beta(f)|^p \le C (\|f\|_\infty)^p$ for all $\beta \in \eye_p$, where
  \[\eye_p \coloneqq \eye \cap \Big(\big\{|\Re(\beta)| < \frac{\sqrt{2d}}{p}\big\} \cup \big\{(p-1)\Re(\beta)^2 + \Im(\beta)^2 < \frac{2d(p-1)}{p}\big\}\Big).\]
  See Figure~\ref{fig:eyep} for an illustration of the region $\eye_p$.
\end{theorem}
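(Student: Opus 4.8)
The plan is to prove a uniform-in-regularization bound for the mollified chaos and then pass to the limit. Fix a mollification $X_\delta$ of $X$ at scale $\delta$ and set $\mu_{\beta,\delta}(f)=\int_B f(x)\,e^{\beta X_\delta(x)-\frac{\beta^2}{2}\E X_\delta(x)^2}\,dx$. By the construction of \cite{JSW2} one has $\mu_{\beta,\delta}(f)\to\mu_\beta(f)$ as $\delta\to0$ (in $L^2$, locally uniformly in $\beta$), so once I have $\E|\mu_{\beta,\delta}(f)|^p\le C\|f\|_\infty^p$ uniformly in $\delta$, Fatou's lemma yields the theorem. Writing $\beta=\beta_1+i\beta_2$, the modulus of the integrand is $e^{\beta_1 X_\delta(x)-\frac12(\beta_1^2-\beta_2^2)\E X_\delta(x)^2}$, which exceeds the real Wick exponential ${:}e^{\beta_1 X_\delta(x)}{:}$ by the factor $e^{\frac12\beta_2^2\E X_\delta(x)^2}$, diverging as $\delta\to0$; hence the triangle inequality is useless and the oscillation carried by $\beta_2$ must be retained. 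The hypothesis $g\in H^{d+\varepsilon}_{loc}(U\times U)$ is used only to guarantee that, after restricting to the fixed ball $B$, the factors $e^{q_iq_j g(z_i,z_j)}$ appearing below are bounded and may be absorbed into the constant; I would dispose of this at the outset and then work with the pure kernel $-\log|z_i-z_j|$.

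The heart of the matter is a moment representation in which the relevant quantity is the \emph{real part} of the interaction energy. For an even integer $p=2n$ one expands $\E|\mu_{\beta,\delta}(f)|^{2n}$ as a $2n$-fold integral over charges $q_i\in\{\beta,\bar\beta\}$ (with $n$ of each type), whose Gaussian expectation equals $\exp\!\big(\sum_{i<j}q_iq_j\,\E X_\delta(z_i)X_\delta(z_j)\big)$; for general real $p$ I would replace this by a dyadic decomposition of the field into scale-increments, estimate the $L^p$-norm of each increment, and sum a geometric series so as to treat all $p\ge1$ at once. In both formulations a same-type pair contributes $\Re(q_iq_j)=\beta_1^2-\beta_2^2$ while a conjugate pair contributes $\beta_1^2+\beta_2^2$, and the two defining sets of $\eye_p$ correspond to two ways of controlling the total energy.

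For the region $\{(p-1)\Re(\beta)^2+\Im(\beta)^2<\tfrac{2d(p-1)}{p}\}$ I would bound the kernel by its modulus, $\prod_{i<j}|z_i-z_j|^{-\Re(q_iq_j)}$, and run a standard multifractal power count over $B$. The binding constraint is the total collision of all $2n$ points, whose scaling exponent is $d(2n-1)-[n(2n-1)\beta_1^2+n\beta_2^2]$; its positivity is exactly $(p-1)\beta_1^2+\beta_2^2<\tfrac{2d(p-1)}{p}$ for $p=2n$, and one checks that no proper sub-collision imposes a stronger condition inside $\eye_p$. This gives the bound with a constant times $\|f\|_\infty^{2n}$, and the same counting survives the passage to real $p$ through the dyadic increment estimate.

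The region $\{|\Re(\beta)|<\sqrt{2d}/p\}$ is precisely where the modulus bound diverges (for large $|\beta_2|$), so here the cancellation must be kept, following the imaginary-chaos analysis of \cite{JSW1}. I would use an Onsager-type electrostatic lower bound for $\Re\sum_{i<j}q_iq_j\log\frac1{|z_i-z_j|}$, exploiting that the conjugate pairs furnish a favourable $-|\beta|^2$ interaction dominating the same-type $-\Re(\beta^2)$ terms; the resulting integral converges as long as the self-energy of the accumulated real charge of size $p|\Re\beta|$ stays subcritical, i.e.\ $p|\Re\beta|<\sqrt{2d}$, which is $p^2\Re(\beta)^2<2d$. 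Securing this Onsager bound in a form that is simultaneously uniform in $\delta$ and in $\beta$, and valid for non-integer $p$ where the charge expansion is unavailable and must be replaced by the dyadic estimate, is the step I expect to be the main obstacle; granting it, the desired control of $\E|\mu_\beta(f)|^p$ by $C\|f\|_\infty^p$ follows, with uniformity over compact subsets of $\eye_p$ coming from the continuous dependence of the finite kernel integrals on $\beta$.
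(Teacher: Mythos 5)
Your approach is genuinely different from the paper's, which never touches correlation\hyp{}function expansions: the paper reduces the statement to the proof of Theorem~6.1 in \cite{JSW2}, where $X$ is written as an almost $\star$-scale invariant field plus an independent regular one, the chaos is realized as the limit of a martingale $(\nu_{n,\beta})$, and the moments are controlled by the von Bahr--Esseen inequality for $1\le p\le 2$ and by Rosenthal's inequality \cite{Rosenthal} for $p>2$. Measured against the full claim, your proposal has two genuine gaps. First, the passage from even integers to general real $p\ge 1$ is not an argument. Your power count is correct for $p=2n$ in the ellipse region (the total-collision exponent comes out right, and the only other binding sub-collision is the balanced two-point one, $|\beta|^2<d$, which indeed holds on $\eye\cap\{(p-1)\Re(\beta)^2+\Im(\beta)^2<2d(p-1)/p\}$ because the ellipse forces $\Re(\beta)^2<2d/p$), but interpolation from even moments strictly shrinks the region, and the proposed substitute --- estimate the $L^p$-norm of each dyadic increment and sum a geometric series --- hides the entire difficulty. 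A single scale-$2^{-k}$ increment is itself a sum of roughly $2^{kd}$ nearly independent box contributions, and the triangle inequality over those boxes yields $2^{kd}\cdot 2^{-kd}\cdot 2^{k((p-1)\Re(\beta)^2+\Im(\beta)^2)/2}$, which \emph{grows} in $k$ throughout the region of interest; to get decay one must exploit independence across boxes and scales via a von Bahr--Esseen or Rosenthal type inequality, and it is exactly at this step that the two conditions whose union defines $\eye_p$ emerge. In other words, the step you defer as a technicality is the theorem.

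Second, the Onsager route for the strip $\{|\Re(\beta)|<\sqrt{2d}/p\}$ does not transfer from \cite{JSW1} as suggested. The electrostatic inequality there rests on the total charge cancelling, which is what allows the interaction energy to be bounded below by a sum of nearest-neighbour self-energies. Here all $2n$ charges carry the same real part $\Re(\beta)$, so the real part of the energy contains the term $\binom{2n}{2}\Re(\beta)^2\log(1/r)$, which tends to $+\infty$ as the configuration collapses to diameter $r\to 0$ and cannot be compensated by any Onsager-type lower bound; convergence of the resulting integral requires the mechanism that controls moments of \emph{real} subcritical chaos at effective parameter $p|\Re(\beta)|$, interlaced with the cancellation supplied by $\Im(\beta)$. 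You correctly identify the heuristic threshold $p|\Re(\beta)|<\sqrt{2d}$, but producing an inequality that realizes it --- uniformly in the regularization, in $\beta$, and for non-integer $p$ --- is a missing idea rather than a loose end, and the martingale estimates imported from \cite{JSW2} are precisely the paper's way of supplying it for all $p\ge 1$ at once.
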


\begin{figure}
  \begin{tikzpicture}[x=1.0cm,y=1.0cm]
    \def\sqrttwo{1.4142135623730951}
    \fill[white] (-5,-2) -- (8,-2) -- (8,2) -- (-5,2) -- cycle;

    \fill[lightgray] (1.5,-0.5) arc[x radius=1.7320508075688772, y radius=1.0, start angle=-30, end angle=30]
    -- (1.0,1.0) arc[start angle=45, end angle=135, radius=\sqrttwo]
    -- (-1.5,0.5) arc[x radius=1.7320508075688772, y radius=1.0, start angle=150, end angle=210]
    -- (-1.0,-1.0) arc[start angle=225, end angle=315, radius=\sqrttwo];

    \draw[->] (-3,0) -- (3,0) node[right] {$\Re(\beta)$};
    \draw[->] (0,-2) -- (0,2) node[above] {$\Im(\beta)$};
    
    \draw (1,1) arc[start angle=45, end angle=135, radius=\sqrttwo];
    \draw (-1,-1) arc[start angle=225, end angle=315, radius=\sqrttwo];
    \draw (1,1) -- (2,0) -- (1,-1);
    \draw (-1,1) -- (-2,0) -- (-1,-1);
    \fill (0,\sqrttwo) circle (0.7pt);
    \fill (0,-\sqrttwo) circle (0.7pt);
    \fill (-2,0) circle (0.7pt);
    \fill (2,0) circle (0.7pt);
    \fill (1,1) circle (0.7pt);
    \fill (-1,1) circle (0.7pt);
    \fill (1,-1) circle (0.7pt);
    \fill (-1,-1) circle (0.7pt);
    \fill (1.5,0.5) circle (1.2pt) node[anchor=south west] {$(\sqrt{2d}/p, \sqrt{2d}(p-1)/p)$};
    \fill (1.5,-0.5) circle (1.2pt);
    \fill (-1.5,-0.5) circle (1.2pt);
    \fill (-1.5,0.5) circle (1.2pt);

    \draw[dotted] (-1,1) arc[start angle=135, end angle=225, radius=\sqrttwo];
    \draw[dotted] (1,-1) arc[start angle=-45, end angle=45, radius=\sqrttwo];
    \draw (1.5,-0.5) arc[x radius=1.7320508075688772, y radius=1.0, start angle=-30, end angle=30];
    \draw[dashed] (1.5,0.5) arc[x radius=1.7320508075688772, y radius=1.0, start angle=30, end angle=150];
    \draw (-1.5,0.5) arc[x radius=1.7320508075688772, y radius=1.0, start angle=150, end angle=210];
    \draw[dashed] (-1.5,-0.5) arc[x radius=1.7320508075688772, y radius=1.0, start angle=210, end angle=330];
  \end{tikzpicture}

  \begin{tikzpicture}[x=1.0cm,y=1.0cm]
    \def\sqrttwo{1.4142135623730951}

    \fill[white] (-5,-2) -- (8,-2) -- (8,2) -- (-5,2) -- cycle;

    \fill[lightgray] (0.7071067811865476,1.224744871391589) arc[radius=\sqrttwo, start angle=60, end angle=120]
    arc[start angle=135, end angle=225, x radius=1.0, y radius=1.7320508075688772]
    arc[start angle=240, end angle=300, radius=\sqrttwo]
    arc[start angle=-45, end angle=45, x radius=1.0, y radius=1.7320508075688772];

    \draw[->] (-3,0) -- (3,0) node[right] {$\Re(\beta)$};
    \draw[->] (0,-2) -- (0,2) node[above] {$\Im(\beta)$};
    
    \draw (1,1) arc[start angle=45, end angle=135, radius=\sqrttwo];
    \draw (-1,-1) arc[start angle=225, end angle=315, radius=\sqrttwo];
    \draw (1,1) -- (2,0) -- (1,-1);
    \draw (-1,1) -- (-2,0) -- (-1,-1);
    \fill (0,\sqrttwo) circle (0.7pt);
    \fill (0,-\sqrttwo) circle (0.7pt);
    \fill (-2,0) circle (0.7pt);
    \fill (2,0) circle (0.7pt);
    \fill (1,1) circle (0.7pt);
    \fill (-1,1) circle (0.7pt);
    \fill (1,-1) circle (0.7pt);
    \fill (-1,-1) circle (0.7pt);

    \draw[dotted] (-1,1) arc[start angle=135, end angle=225, radius=\sqrttwo];
    \draw[dotted] (1,-1) arc[start angle=-45, end angle=45, radius=\sqrttwo];
    \draw[dashed] (0.7071067811865476,1.224744871391589) arc[x radius=1.0, y radius=1.7320508075688772, start angle=45, end angle=135];
    \draw (-0.7071067811865476,1.224744871391589) arc[x radius=1.0, y radius=1.7320508075688772, start angle=135, end angle=225];
    \draw[dashed] (-0.7071067811865476,-1.224744871391589) arc[x radius=1.0, y radius=1.7320508075688772, start angle=225, end angle=315];
    \draw (0.7071067811865476,-1.224744871391589) arc[x radius=1.0, y radius=1.7320508075688772, start angle=-45, end angle=45];
  \end{tikzpicture}
  
  \caption{$\eye_p$ is the shaded area. In the first picture we have $1 < p < 2$ and in the second one $p > 2$. The dashed line is the ellipse $(p-1)x^2 + y^2 = 2d(p-1)/p$ and the dotted circle corresponds to the $L^2$-regime.}\label{fig:eyep}
\end{figure}
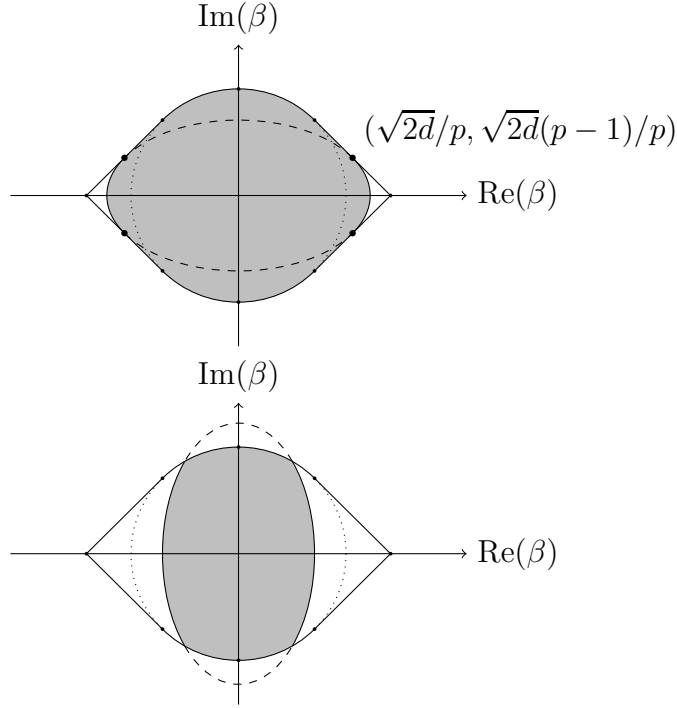

For the Besov-regularity we prove the following.

\begin{theorem}\label{thm:besov}
  Assume that in \eqref{eq:cov} we have $g \in C^\infty(U \times U)$. 
  Let $\beta \in \eye$ and $p \ge 1$. We have the following two cases:
  \begin{itemize}
  \item If $|\Re(\beta)| \le \frac{\sqrt{2d}}{p}$, then $\mu \in B_{p,q,loc}^s(U)$ for $s < - \frac{(p-1)\Re(\beta)^2 +\Im(\beta)^2}{2}$ and any $q \ge 1$.
  \item If $|\Re(\beta)| \ge \frac{\sqrt{2d}}{p}$, then $\mu \in B_{p,q,loc}^s(U)$ for $s < \frac{d}{p} - \sqrt{2d} |\Re(\beta)| + \frac{\Re(\beta)^2 - \Im(\beta)^2}{2}$ and any $q \ge 1$.
  \end{itemize}
\end{theorem}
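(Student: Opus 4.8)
The plan is to control the pairings of $\mu$ against a wavelet basis, to bound their moments via the approximate self-similarity of the field, and then to sum and optimise. Fix a compactly supported wavelet basis $\{\psi_{j,k}\}$ with $\psi_{j,k}(x)=2^{jd/2}\psi(2^jx-k)$, smooth and with enough vanishing moments that the characterisation
\[
\|u\|_{B^s_{p,q}}^q\approx\sum_{j\ge0}2^{jq(s+d/2-d/p)}\Big(\sum_k|\langle u,\psi_{j,k}\rangle|^p\Big)^{q/p}
\]
is valid in the (possibly negative) range of smoothness at hand. As $\mu$ is $H^{-d}_{loc}$-valued by \cite{JSW2} and each $\psi_{j,k}$ is smooth with compact support, the coefficients $\langle\mu,\psi_{j,k}\rangle$ are well defined. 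To obtain $\mu\in B^s_{p,q,loc}(U)$ almost surely I fix a compact $K\subset U$, keep only the at most $C2^{jd}$ indices $k$ with $\supp\psi_{j,k}\cap K\ne\emptyset$, and show that the expectation of a truncated Besov norm is finite; a countable exhaustion of $U$ by such compacta then yields the local statement.

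The heart of the argument is the scaling moment estimate
\[
\E|\langle\mu,\psi_{j,k}\rangle|^{p'}\le C\,2^{-j(\gamma(p')-p'd/2)},\qquad \gamma(p')=p'\Big(d+\tfrac{\Re(\beta)^2-\Im(\beta)^2}{2}\Big)-\tfrac{p'^2\Re(\beta)^2}{2},
\]
valid uniformly over the retained $k$ whenever $\beta\in\eye_{p'}$. Writing $t=2^{-j}$, $x_0=2^{-j}k$ and $\psi_{t,x_0}(y)=\psi((y-x_0)/t)$, so that $\psi_{j,k}=t^{-d/2}\psi_{t,x_0}$, this rests on the fact that on a ball of radius $t$ the field $X$ decomposes, up to a smooth and therefore harmless Gaussian correction controlled by $g\in C^\infty$, as a single Gaussian $\Omega_t\sim N(0,\log(1/t))$ plus an independent rescaled copy of $X$; hence $\mu(\psi_{t,x_0})\stackrel{d}{\approx}t^{d+\beta^2/2}e^{\beta\Omega_t}\mu(\psi)$. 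Taking the modulus and the $p'$-th moment, the deterministic prefactor yields $t^{p'(d+(\Re(\beta)^2-\Im(\beta)^2)/2)}$, the log-normal factor yields $\E e^{p'\Re(\beta)\Omega_t}=t^{-p'^2\Re(\beta)^2/2}$, the unit-scale factor $\E|\mu(\psi)|^{p'}$ is finite exactly by Theorem~\ref{thm:moments} when $\beta\in\eye_{p'}$, and the normalisation of $\psi_{j,k}$ contributes $t^{-p'd/2}$.

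I expect the main obstacle to be precisely this estimate: turning the heuristic self-similarity into a rigorous bound uniform in the centre $x_0\in K$, valid for every real $p'\ge1$ and not merely integers (so the multi-point moment identity must be supplemented by the construction and analytic-continuation arguments of \cite{JSW2}, or by comparison inequalities of Kahane type), and with $g$ entering only the constant $C$ and not the exponent $\gamma(p')$. Granting it, the rest is bookkeeping. Writing $S_j=\sum_k|\langle\mu,\psi_{j,k}\rangle|^p$ and using $S_j^{p'/p}\le\sum_k|\langle\mu,\psi_{j,k}\rangle|^{p'}$ (valid since $p'\le p$) together with the scaling estimate over the at most $C2^{jd}$ retained coefficients, one gets $\E S_j^{p'/p}\le C2^{j(d-\gamma(p')+p'd/2)}$, so that taking $q=p'$ gives
\[
\E\|\mu\|_{B^{s}_{p,p'}(K)}^{p'}\le C\sum_{j\ge0}2^{j\,[\,p'(s+d/2-d/p)+d-\gamma(p')+p'd/2\,]}.
\]
This is finite as soon as the bracket is negative, that is as soon as $s$ lies below
\[
\frac{d}{p}+\frac{\Re(\beta)^2-\Im(\beta)^2}{2}-\frac{p'\Re(\beta)^2}{2}-\frac{d}{p'},
\]
a threshold I denote $s(p')$. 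Hence $\mu\in B^{s}_{p,p',loc}(U)$ almost surely for every $s<s(p')$; as this is an open condition, the Besov embedding $B^{s'}_{p,p'}\hookrightarrow B^{s}_{p,q}$ for $s<s'$ upgrades membership to every $q\ge1$.

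It remains to optimise $s(p')$ over the admissible exponents. Finiteness of $\E|\mu(\psi)|^{p'}$ forces $\beta\in\eye_{p'}$, and the step used above requires $p'\le p$. The function $s(p')$ is concave with unconstrained maximiser $\sqrt{2d}/|\Re(\beta)|$, so the best admissible choice is $p^\ast=\min\big(p,\sqrt{2d}/|\Re(\beta)|\big)$. When $|\Re(\beta)|\le\sqrt{2d}/p$ one has $p^\ast=p$ and $s(p)=-\big((p-1)\Re(\beta)^2+\Im(\beta)^2\big)/2$, which is the first case; when $|\Re(\beta)|\ge\sqrt{2d}/p$ one has $p^\ast=\sqrt{2d}/|\Re(\beta)|$ and $s(p^\ast)=d/p-\sqrt{2d}\,|\Re(\beta)|+(\Re(\beta)^2-\Im(\beta)^2)/2$, which is the second case. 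The admissibility of $p^\ast$ is the final check: in the interior of the first case it follows at once from $|\Re(\beta)|<\sqrt{2d}/p$, whereas otherwise $|\Re(\beta)|=\sqrt{2d}/p^\ast$ and the defining condition of $\eye_{p^\ast}$ reduces through the ellipse inequality to $|\Re(\beta)|+|\Im(\beta)|<\sqrt{2d}$, a bound that holds throughout $\eye$ because the eye is contained in the diamond $\{|\Re(\beta)|+|\Im(\beta)|<\sqrt{2d}\}$. This completes the plan.
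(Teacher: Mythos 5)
Your proposal follows essentially the same route as the paper: wavelet coefficients, a moment bound $\E|\alpha(\lambda)|^{p'}\le C\,2^{j(\frac{p'(p'-1)\Re(\beta)^2}{2}+\frac{p'\Im(\beta)^2}{2}-\frac{dp'}{2})}$ obtained from approximate self-similarity, the subadditivity bound $\|\cdot\|_{\ell^p}^{p'}\le\|\cdot\|_{\ell^{p'}}^{p'}$ over the $O(2^{jd})$ relevant coefficients, and optimisation of the auxiliary exponent at $p^\ast=\min(p,\sqrt{2d}/|\Re(\beta)|)$; your exponents and the resulting two thresholds for $s$ agree exactly with the paper's. The one step you flag as the main obstacle --- making the scaling estimate rigorous for complex $\beta$, general $g$, and non-integer moments --- is precisely what the paper's two lemmas supply: Lemma~\ref{lemma:reduction} factors out the smooth Gaussian part $G$ as a smooth random multiplier $h_\beta=e^{\beta G-\beta^2 g/2}$, reducing local Besov regularity to the exactly scaling field $X^e$, and Lemma~\ref{le:exact} proves the exact scaling identity $\langle\mu^e_\beta,\varphi(\varepsilon^{-1}\cdot)\rangle\sim\varepsilon^d e^{\beta Z-\beta^2\E Z^2/2}\langle\mu^e_\beta,\varphi\rangle$ for all $\beta\in\eye$ by establishing it for real $\beta$ via convolution approximations and then analytically continuing in $\beta$ (both sides being $H^{-d}$-valued random analytic functions), which is exactly the continuation argument you anticipated; translation invariance of the law of $\mu^e$ then gives uniformity in the centre $k$. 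The only cosmetic differences are that the paper concludes via Chebyshev and Borel--Cantelli on the quantities $A_j$ rather than by bounding the expected Besov norm and embedding in $q$, and that your admissibility check for $p^\ast$ (reducing the ellipse condition at $p'=\sqrt{2d}/|\Re(\beta)|$ to $|\Re(\beta)|+|\Im(\beta)|<\sqrt{2d}$, which holds on all of $\eye$) is a cleaner packaging of the paper's case analysis on the boundary of the eye.
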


\begin{remark}
  We suspect that the bounds obtained in Theorems \ref{thm:moments} and \ref{thm:besov} are optimal, but we do not touch this question in this paper. The smoothness condition on $g$ in Theorem~\ref{thm:besov} can be relaxed, see Remark~\ref{rmk:gregularity} below.
\end{remark}

\section{Auxiliary results and reduction to the exactly scaling field}

We refer to \cite{JSW2} for the construction of the complex chaos corresponding to the field $X$ with covariance \eqref{eq:cov} assuming that $g \in H^{d + \varepsilon}(U \times U)$. In particular, the existence implies that one may simply define $\mu_\beta$ for complex values of $\beta \in \eye$ as the analytic continuation of the standard chaos that corresponds to the parameter values $\beta \in \reals \cap \eye = (-\sqrt{2d}, \sqrt{2d})$.

A centred log-correlated field $X^e$ on a domain $U$ with the simple covariance structure  
\begin{equation}\label{eq:exact}
\E X^e(x)X^e(y)=\log (1/|x-y|),\qquad x,y\in U,
\end{equation}
is called \emph{exactly scaling}. Such fields exist locally in any dimension.

The chaos corresponding to $X^e$ has a very simple scaling property as Lemma~\ref{le:exact} shows and hence estimating the Besov norms via wavelets becomes easier. Our strategy is to reduce the case of a general field to this special case. We start with stating carefully the scaling relation. In what follows
$\langle \lambda, \phi\rangle$ stands for the standard duality pairing with the (test) function $\phi$ and the (generalised) function $\lambda$. Moreover, $\sim$ stands for equality in distribution.

\begin{lemma}\label{le:exact}
{\rm (i)}\quad In any dimension $d\geq 1$ there is $r_d>0$ so that $\log(1/|x-y|)$ is positive definite  on the ball $B(0,r_d)$.

\smallskip

\noindent{\rm (ii)}\quad Consider the complex chaos $\mu^e_{\beta}="e^{\beta X^e(x)}"$ for parameter values $\beta\in \eye$. For any $\varphi\in C_0^\infty (B(0,r_d))$ and $\varepsilon\in (0,1)$ it satisfies the scaling relation
\begin{equation}\label{eq:scaling}
\langle\mu^e_{\beta}, \varphi(\varepsilon^{-1}\cdot)\rangle\;\; {\sim}\;\; \varepsilon^d e^{\beta \RV-\beta^2\E \RV^2/2}\langle\mu^e_{\beta}, \varphi\rangle,
\end{equation}
where on the right hand side $\RV\sim N(0,\log(1/\varepsilon))$ is independent of $\mu^e_{\beta}$.
\end{lemma}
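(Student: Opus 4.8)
The plan is to treat the two parts by different means: part (i) is a Bochner-type positivity statement about a fixed kernel, while part (ii) is a soft consequence of the exact self-similarity of $X^e$ together with the behaviour of the Wick exponential under adding an independent constant Gaussian mode. For (i), I would reduce the claim to nonnegativity of the quadratic form $\varphi \mapsto \iint \varphi(x)\overline{\varphi(y)}\log\frac{1}{|x-y|}\,dx\,dy$ for $\varphi \in C_c^\infty(B(0,r_d))$, which is exactly positive-definiteness of the kernel on the ball and is what is needed for $X^e$ to exist as a random distribution there. By Parseval this form equals $\langle \widehat{\log(1/|\cdot|)}, |\hat\varphi|^2\rangle$, so it suffices to understand the distributional Fourier transform of $\log(1/|x|)$. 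Using the Frullani representation $\log\frac1{|x|} = \tfrac12\int_0^\infty(e^{-|x|^2/(2u)} - e^{-1/(2u)})\frac{du}{u}$, each Gaussian $e^{-|x|^2/(2u)}$ is positive definite with strictly positive Fourier transform, and performing the $u$-integration shows that $\widehat{\log(1/|\cdot|)}$ has nonnegative density $c_d|\xi|^{-d}$ with $c_d>0$, the only non-positive contribution being a finite-part point mass at the origin coming from the $e^{-1/(2u)}$ terms. The remaining point is to show that on a sufficiently small ball this zero-mode cannot spoil positivity; equivalently, one exhibits a globally positive-definite function agreeing with $\log\frac1{|\cdot|}$ on $B(0,2r_d)$, so that only its values there enter when $\supp\varphi \subset B(0,r_d)$. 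I regard this as the main obstacle in (i); it is classical (it is the statement that the exactly scaling field exists locally), so I would adapt the standard argument rather than reprove it.

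For (ii) the strategy is to push the exact scaling of the Gaussian field through the continuous, measurable functional $X^e \mapsto \mu^e_\beta$. The starting observation is a field identity: comparing covariances via $\log\frac1{|\varepsilon x - \varepsilon y|} = \log\frac1\varepsilon + \log\frac1{|x-y|}$ shows that, as Gaussian generalized functions on $B(0,r_d)$, one has $X^e(\varepsilon\,\cdot) \sim X^e(\cdot) + \Omega_\varepsilon$ with $\Omega_\varepsilon \sim N(0,\log(1/\varepsilon))$ independent of $X^e$. I would then work at the level of a scale-covariant mollification $X^e_\delta = X^e * \theta_\delta$, with $\theta_\delta(w)=\delta^{-d}\theta(w/\delta)$, for which the deterministic identity $X^e_\delta(\varepsilon u) = [X^e(\varepsilon\,\cdot)]_{\delta/\varepsilon}(u)$ holds pathwise by a change of variables. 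Combined with the field identity (and the fact that mollifying a constant field leaves it unchanged) this gives $X^e_\delta(\varepsilon\,\cdot) \sim X^e_{\delta/\varepsilon}(\cdot) + \Omega_\varepsilon$ jointly in the space variable.

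Two elementary computations then settle the regularized statement. First, the change of variables $x = \varepsilon u$ gives $\langle \mu^e_{\beta,\delta}, \varphi(\varepsilon^{-1}\cdot)\rangle = \varepsilon^d \int \varphi(u)\, {:}e^{\beta X^e_\delta(\varepsilon u)}{:}\,du$, with the Wick normalization taken with respect to $\E X^e_\delta(\varepsilon u)^2$. Second, since $\Omega_\varepsilon$ is independent of the field, the Wick exponential factorizes as ${:}e^{\beta(X+\Omega_\varepsilon)}{:} = e^{\beta\Omega_\varepsilon - \frac{\beta^2}{2}\E\Omega_\varepsilon^2}\,{:}e^{\beta X}{:}$, an identity valid for all complex $\beta$ by pure exponential algebra. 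Inserting the field identity and matching variances yields, jointly in $u$ and hence after integrating against $\varphi$, the relation $\langle \mu^e_{\beta,\delta}, \varphi(\varepsilon^{-1}\cdot)\rangle \sim \varepsilon^d e^{\beta\Omega_\varepsilon - \frac{\beta^2}{2}\log(1/\varepsilon)}\langle \mu^e_{\beta,\delta/\varepsilon},\varphi\rangle$, with $\Omega_\varepsilon$ independent of $\mu^e_{\beta,\delta/\varepsilon}$.

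Finally I would let $\delta \to 0$. On the left $\langle \mu^e_{\beta,\delta}, \varphi(\varepsilon^{-1}\cdot)\rangle \to \langle \mu^e_\beta, \varphi(\varepsilon^{-1}\cdot)\rangle$, and on the right $\mu^e_{\beta,\delta/\varepsilon} \to \mu^e_\beta$ (since $\delta/\varepsilon \to 0$) by the convergence of the regularized complex chaos established in \cite{JSW2}, the prefactor $\varepsilon^d e^{\beta\Omega_\varepsilon - \frac{\beta^2}{2}\log(1/\varepsilon)}$ being unaffected by $\delta$. Equality in distribution is preserved in the limit, and since $\Omega_\varepsilon$ is independent of $X^e$ the limiting factor $\RV := \Omega_\varepsilon \sim N(0,\log(1/\varepsilon))$ is independent of $\langle \mu^e_\beta,\varphi\rangle$, which is exactly \eqref{eq:scaling}. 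The only genuine care in this last step is to ensure that the convergence of the regularizations holds jointly with the fixed independent mode $\Omega_\varepsilon$ and that all evaluation points remain inside $B(0,r_d)$, both of which are guaranteed by $\varepsilon<1$ and $\supp\varphi$ being a compact subset of $B(0,r_d)$.
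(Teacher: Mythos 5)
Your proposal is correct in substance, and for the heart of part (ii) --- the covariance identity $X^e(\varepsilon\,\cdot)\sim X^e+\RV$, the mollification trick $X^e_\delta(\varepsilon\,\cdot)=[X^e(\varepsilon\,\cdot)]_{\delta/\varepsilon}$, the factorization of the Wick exponential under an independent Gaussian mode, and the change of variables producing the $\varepsilon^d$ --- it coincides with the paper's computation (the paper simply specializes to $\delta=\varepsilon^{n+1}$ so that the two sides involve consecutive terms $X^e_{n+1}$, $X^e_n$ of one approximating sequence). For part (i) both you and the paper defer to the known local existence of the exactly scaling kernel; the paper cites \cite{JSW2} directly, while your Fourier-analytic sketch is the standard route to that cited result. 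The genuine difference is how you reach complex $\beta$: you pass to the limit $\delta\to 0$ directly for $\beta\in\eye$, which requires as input that the \emph{convolution-regularized complex} chaos $\mu^e_{\beta,\delta}$ converges to $\mu^e_\beta$ (tested against a fixed $\varphi$, jointly with the independent mode $\RV$). The paper instead takes the limit only for real $\beta\in(0,\sqrt{2d})$, where convergence of the regularizations is classical, and then transfers the identity to all of $\eye$ by noting that both sides of \eqref{eq:scaling} are $H^{-d}$-valued random analytic functions of $\beta$ whose joint laws agree on $\eye\cap\reals$. Your route is more direct and avoids the analytic-continuation step, but it leans on a stronger convergence statement for complex parameters; the paper's route needs only the real-$\beta$ convergence plus the analyticity in $\beta$ that is already built into the very definition of $\mu^e_\beta$ in \cite{JSW2}. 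If you keep your version, you should state explicitly which result of \cite{JSW2} gives the convergence of the mollified complex chaos, since that is where the actual work is hiding.
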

\begin{proof}
(i) This follows immediately from \cite[Theorem 4.5(i)]{JSW2}. 

(ii) In the case of real $\beta$ the scaling relation \eqref{eq:scaling} is well-known at least in dimension 1 to the experts (see \cite[Theorem 4]{BM}), but for the reader's convenience we give a full argument here. Directly from the covariance structure we have that for any $\varepsilon >0$ there is $\RV\sim N(0,\log(1/\varepsilon))$ so that
\begin{equation}\label{eq:scale}
X^e(\varepsilon \cdot)\sim X^e + \RV\qquad\textrm{on}\quad B(0,r_d).
\end{equation}
Consider convolution approximations $ X^e *\psi_\delta$,, where the bump function $\psi$ has a compact support, and $\psi_\delta:=\delta^{-d}\psi(\cdot/\delta)$. Denote $X^e_n :=X^e*\psi_{\varepsilon^n}$. For the standard approximation sequence $X^e_n$ of the field $X^e$ the above scaling relation takes the form
$$
X^e_{n+1}(\varepsilon\cdot)\sim  X^e_{n}+\RV,
$$
which holds in any fixed compact subset of $B(0,r_d)$ as soon as $n$ is large enough. Denote by $\mu^e_{\beta,n}(dx)= \exp\big(\beta X^e_{n+1}(x)-\beta^2\E(X^e_{n}(x))^2/2\big)dx$ the approximation of $\mu^e_\beta$ on level $n$ and observe that the above equality in distribution yields that
\begin{eqnarray*}
&&\langle\mu^e_{\beta,n+1},\varphi(\varepsilon^{-1}\cdot)\rangle\\
&=&\int_{B(0,r_d)}\exp\big(\beta X^e_{n+1}(x)-\beta^2\E(X^e_{n+1}(x))^2/2\big)\varphi(x/\varepsilon)dx\\
&\sim& \varepsilon^d e^{\beta \RV-\beta^2\E \RV^2/2}\int_{B(0,r_d)}\exp\big(\beta X^e_{n}(x)-\beta^2\E(X^e_{n}(x))^2/2\big)\varphi(x)dx\\
&=&\varepsilon^d e^{\beta \RV-\beta^2\E \RV^2/2}\langle\mu^e_{\beta,n},\varphi\rangle
\end{eqnarray*}
Letting $n\to\infty$ we obtain the claim for real $\beta$. Moreover, the same proof also shows that any $n$-tuple $(\mu^e_{\beta_1},\ldots, \mu^e_{\beta_n})$ satisfies a similar distributional scaling identity. Both sides of \eqref{eq:scaling} are random analytic functions in $\beta$ taking values in the Hilbert space $H^{-d}(B(0,r_d)$, and their restriction to $\eye\cap \reals$ has the same (joint in $\beta$) distribution, which easily yields by analytic continuation that they have the same distribution for all $\beta\in \eye$.
\end{proof}

Finally, we need the following result in order to pass from $X^e$ to a general  field $X$.
\begin{lemma}\label{lemma:reduction}
Let $\beta$ be real and $X$ be a log-correlated field on the ball $B\subset\reals^d$ with decomposition $X= X^e + G$ and covariance
\begin{equation}\label{eq:kova}
\E X(y)X(z) = \log \frac{1}{|y-z|} + g(y,z),
\end{equation}
where $g$ is $C^\infty$-smooth. Then $G$ is smooth.   
Denote by $\mu_\beta$ the chaos generated by the field $X$, and recall that $\mu^e_{\beta}$ stands for the chaos generated by $X^e$. Then for any test function $\psi\in C_c^\infty(U)$ and any $\beta\in \eye$ it holds that
\begin{equation}
\label{eq:formal-general-def}
\langle \mu_\beta \psi\rangle = \langle \mu^e_{\beta}, h_\beta \psi \rangle,
\end{equation}
where $h_\beta$ stands for the smooth random function
$$
h_\beta(x) := e^{\beta G(x)-\beta^2 g(x,x)/2}.
$$
In particular, since also $1/h_\beta$ is smooth, we deduce that the local Besov-smoothness of the chaos $X_\beta$ is the same as that of the chaos $X^e_{\beta}$.
\end{lemma}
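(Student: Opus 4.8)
The plan is to establish the multiplicative identity \eqref{eq:formal-general-def} first for real $\beta$ via a regularisation that diagonalises the Wick exponential along the independent splitting $X = X^e + G$, then to extend it to all $\beta \in \eye$ by analytic continuation exactly as in Lemma~\ref{le:exact}, and finally to deduce the Besov statement from the mapping properties of smooth pointwise multipliers. I begin with the smoothness of $G$. Since the decomposition $X = X^e + G$ is into independent fields, subtracting covariances in \eqref{eq:kova} gives $\E G(y)G(z) = g(y,z)$, which is $C^\infty$ on $B \times B$ by assumption. A centred Gaussian field with $C^\infty$ covariance admits an almost surely $C^\infty$ modification (one may apply Kolmogorov's continuity criterion to each partial derivative, whose covariance is again smooth), so $h_\beta(x) = e^{\beta G(x) - \beta^2 g(x,x)/2}$ is a.s. smooth, and since the exponential never vanishes the same holds for $1/h_\beta(x) = e^{-\beta G(x) + \beta^2 g(x,x)/2}$.

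For the identity itself, fix a mollifier $\eta_\varepsilon$ and set $X_\varepsilon = X * \eta_\varepsilon$, $X^e_\varepsilon = X^e * \eta_\varepsilon$ and $G_\varepsilon = G * \eta_\varepsilon$, so that $X_\varepsilon = X^e_\varepsilon + G_\varepsilon$ with $X^e_\varepsilon$ and $G_\varepsilon$ independent. The cross term in $\E X_\varepsilon(x)^2$ then vanishes, so the Wick normalisation factorises and the approximating measures $\mu_{\beta,\varepsilon}$ and $\mu^e_{\beta,\varepsilon}$ satisfy
\[
  e^{\beta X_\varepsilon(x) - \frac{\beta^2}{2}\E X_\varepsilon(x)^2}\,dx = h_{\beta,\varepsilon}(x)\, e^{\beta X^e_\varepsilon(x) - \frac{\beta^2}{2}\E X^e_\varepsilon(x)^2}\,dx, \qquad h_{\beta,\varepsilon}(x) := e^{\beta G_\varepsilon(x) - \frac{\beta^2}{2}\E G_\varepsilon(x)^2}.
\]
Pairing against $\psi$ gives $\langle \mu_{\beta,\varepsilon}, \psi\rangle = \langle \mu^e_{\beta,\varepsilon}, h_{\beta,\varepsilon}\psi\rangle$ for each $\varepsilon$. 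As $\varepsilon \to 0$ the left side converges to $\langle \mu_\beta, \psi\rangle$; on the right side $h_{\beta,\varepsilon}\psi \to h_\beta\psi$ in $C_c^\infty$ by the smoothness of $G$, while $\mu^e_{\beta,\varepsilon} \to \mu^e_\beta$ in the sense of \cite{JSW2}, and crucially these two factors are independent. Conditioning on the realisation of $G$ thus reduces the right-hand pairing to $\mu^e_{\beta,\varepsilon}$ tested against the fixed smooth compactly supported function $h_\beta\psi$, which converges, yielding \eqref{eq:formal-general-def} for real $\beta$.

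To promote the identity to all $\beta \in \eye$ I would argue as in Lemma~\ref{le:exact}: both sides of \eqref{eq:formal-general-def} are $H^{-d}(B)$-valued random analytic functions of $\beta$ — for the right side one uses that $\beta \mapsto h_\beta$ is an analytic smooth-function-valued map multiplied by the analytic $\mu^e_\beta$ — and they coincide on the real segment $\eye \cap \reals$, hence on all of $\eye$ by analytic continuation. The identity then reads $\mu_\beta = h_\beta\,\mu^e_\beta$ as distributions, with both $h_\beta$ and $1/h_\beta$ a.s. smooth. Since multiplication by a smooth function (with derivatives bounded on each compact set) is a bounded operator on every local Besov space $B^s_{p,q,loc}(U)$, multiplication by $h_\beta$ is an isomorphism of $B^s_{p,q,loc}(U)$ with inverse multiplication by $1/h_\beta$; consequently $\mu_\beta \in B^s_{p,q,loc}(U)$ if and only if $\mu^e_\beta \in B^s_{p,q,loc}(U)$, which is the asserted equivalence of local Besov smoothness.

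I expect the main obstacle to be the joint limit on the right-hand side, where the tested distribution $\mu^e_{\beta,\varepsilon}$ and the test function $h_{\beta,\varepsilon}\psi$ are both random and converge simultaneously, so that exchanging the limit with the pairing is not automatic. This is precisely where the independence of $X^e$ and $G$ does the essential work: conditioning on $G$ freezes $h_\beta\psi$ into a deterministic test function and leaves only the known convergence of $\mu^e_{\beta,\varepsilon}$, which must be accompanied by a uniform-in-$\varepsilon$ control of $\mu^e_{\beta,\varepsilon}$ in the relevant topology to pass the limit inside the conditional expectation.
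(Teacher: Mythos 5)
Your overall route (mollify, factorize the Wick exponential, pass to the limit for real $\beta$, analytically continue, conclude via smooth pointwise multipliers) is the same as the paper's. But there is a genuine gap: you assume that $X^e$ and $G$ are \emph{independent}, and this assumption is not part of the hypothesis and is in general false for the decomposition coming from \cite{JSW2} (there $G$ arises as a difference of regular fields and is correlated with $X^e$). Independence is doing real work in three places in your argument: (a) you derive $\E G(y)G(z)=g(y,z)$, and hence the smoothness of $G$, by subtracting covariances, which requires the cross-covariance to vanish; (b) you write $\E X_\varepsilon(x)^2=\E X^e_\varepsilon(x)^2+\E G_\varepsilon(x)^2$ and accordingly normalise $h_{\beta,\varepsilon}$ by $\E G_\varepsilon(x)^2$ --- without independence this identity fails and your factorisation of the regularised densities is simply wrong, since $\E G(x)^2$ and $g(x,x)$ differ by the cross term $2\,\E X^e(x)G(x)$ (note that the lemma's $h_\beta$ is deliberately normalised by $g(x,x)$, not by $\E G(x)^2$); and (c) your device for passing to the limit, conditioning on $G$ to freeze the test function, is unavailable when $G$ is correlated with $X^e$, because conditioning then changes the law of $X^e$ and the known convergence of $\mu^e_{\beta,\varepsilon}$ no longer applies.

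The paper avoids all of this: the identity $\E X_\delta(x)^2=\E X^e_\delta(x)^2+g_{\delta,\delta}(x,x)$ is obtained purely algebraically by mollifying the covariance relation \eqref{eq:kova} in each variable, so the factorisation holds with $g_{\delta,\delta}(x,x)$ in the exponent regardless of any independence; the smoothness of $G$ is imported from the construction in the proof of Theorem A of \cite{JSW2} rather than from a covariance computation; and the limit $\delta\to 0$ is taken for real subcritical $\beta$, where the approximations are positive measures converging weakly and the test functions $h_{\beta,\delta}\psi$ converge locally uniformly with supports in a fixed compact set, so no conditioning is needed. Your closing paragraph correctly identifies the simultaneous convergence of the distribution and the test function as the delicate point, but the resolution you propose hinges on the independence you should not assume; replacing $\E G_\varepsilon(x)^2$ by $g_{\varepsilon,\varepsilon}(x,x)$ and using the positivity of the real-$\beta$ approximating measures repairs the argument.
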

\begin{proof}
It is enough to prove the claim for $\beta\in (0,\sqrt{2d})$ since the general claim then follows by analytic continuation.  We first observe that $G$ indeed is a Gaussian field with $C^\infty$-smooth realizations directly by the proof of Theorem A in \cite{JSW2}. Assume that $\beta\in (0,\sqrt{2d})$ and
consider the mollifications  $X_\delta, G_\delta$ and $X^e_{\delta}$ of the fields $X, G$ and $X^e$  with the same compactly supported radially symmetric mollifier. We then  have $X_\delta=X^e_{\delta}+G_\delta.$ Especially, since the covariance of a mollified field is obtained by mollifying the covariance separately with respect to  variable $x$ and variable $y$, the equality \eqref{eq:kova} yields that 
$$
\E X_\delta(x)X_\delta(x) =  \E X^e_{\delta}(x)X^e_{\delta}(x) + g_{\delta,\delta}(x,x),
$$
where $g_{\delta,\delta}$ stands for the mollification of $g$ with respect to both of the variables separately. We obtain
\begin{eqnarray*}
&&\exp\big(\beta X_\delta(x)-(\beta^2/2)\E X_\delta(x)^2\big)\\
&=&\exp\big(\beta G_\delta(x)-(\beta^2/2)g_{\delta,\delta}(x,x)\big) \exp\big(\beta X^e_{\delta}(x)-(\beta^2/2)\E X^e_{\delta}(x)^2\big),
\end{eqnarray*}
and the claim follows by observing that obviously, almost surely,
$$
\exp\big(\beta G_\delta(x)-(\beta^2/2)g_{\delta,\delta}(x,x)\big)\to\exp\big(\beta G(x)-(\beta^2/2)g(x,x)\big)
$$
locally in sup-norm as $\delta\to 0.$
\end{proof}

\begin{remark}\label{rmk:gregularity}
  The condition $g \in C^\infty(U \times U)$ can be replaced, for example, by the condition $g \in H_{loc}^{3d + \varepsilon}(U \times U)$. Namely, by \cite[Section~4.7.1]{RunstSickel} $B_{\infty,\infty,loc}^d(\reals^d)$ is locally a pointwise multiplier in all Besov spaces $B_{p,q,loc}^s(\reals^d)$ we use ($1 \le p,q \le \infty$ and $s \in (-d,0)$). It is easy to check that $e^{\beta G(x) - (\beta^2/2) g(x,x)} \in B_{\infty,\infty,loc}^d(\reals^d)$ under the condition above.
\end{remark}

\section{Proofs of Theorems \ref{thm:moments} and \ref{thm:besov}}

\begin{proof}[Proof of Theorem~\ref{thm:moments}]
  The case $1\leq p\leq 2$ essentially appears in the proof of \cite[Theorem~6.1]{JSW2}. There one writes $X$ as a sum of an almost $\star$-scale invariant field $L$ and an independent regular field $R$, obtaining approximating measures
  \[\mu_{n,\beta}(x) = \exp\big(\beta R(x) - \frac{\beta^2}{2} \E R(x)^2\big) \nu_{n,\beta}(x),\]
  with
  \[\nu_{n,\beta}(x) = \exp\big(\beta L_n(x) - \frac{\beta^2}{2} \E L_n(x)^2\big)\]
  and $(L_n)$ being a martingale approximation of the field $L$. We refer to the paper \cite{JSW2} (see especially proof of Theorem~6.1) for all the above notions. By independence, the regular part will not affect the finiteness of the moments, and thus the moment estimates obtained in  the rest of the proof apply.

  For the case $p > 2$ we simply note that one can replace the von Bahr--Esseen inequality used in the proof of \cite[Theorem~6.1]{JSW2}  by Rosenthal's inequality \cite[Theorem~3]{Rosenthal} and obtain the result.
\end{proof}

The proof of Theorem~\ref{thm:besov} will be based on wavelet analysis, so let us first recall from \cite[Section~6.10]{Meyer} some basic facts on the existence of wavelet bases and how Besov spaces can be characterised by them.

For a given integer $R \ge 1$ there exist functions $\phi, \psi_{\lambda} \in L^2(\reals^d)$ ($\lambda \in \Lambda$) such that the following properties hold:
\begin{enumerate}[label={\rm (\roman*)}]
\item The functions $(\phi(x - k))_{k \in \integers^d}$ together with the functions $\psi_\lambda$ form an orthonormal basis of $L^2(\reals^d)$.
\item The index set $\Lambda$ equals $\bigcup_{j \ge 0} 2^{-j-1}\integers^d \setminus \{0\}$, and it is the disjoint union of the sets $\Lambda_j$ where $\Lambda_0 = \frac{1}{2}\integers^d$ and $\Lambda_j$ for $j \ge 1$ equals $2^{-j-1} \integers^d \setminus \bigcup_{k=0}^{j-1} \Lambda_k$.
\item There exist basic "mother wavelets" $\psi^{\nu}$ indexed by
$\nu \in \{0,1\}^d \setminus (0,\dots,0)$ such that $\psi_\lambda$ for
 $\lambda = 2^{-j}k + 2^{-j-1}\nu\in \Lambda_j$  is given by $\psi_\lambda(x) = 2^{jd/2} \psi^{(\nu)}(2^jx - k)$ , where $j$ is the level of the wavelet, $k$ is the shift and $\nu \in \{0,1\}^{d} \setminus (0,\dots,0)$ is the index of the basic mother wavelet. Notice that  the number of mother wavelets is  $2^d - 1$ in dimension $d$.
\item The functions $\phi$ and $\psi_\lambda$ are $R$ times differentiable.
\item There exists a compact set $K$ such that $\supp \phi, \supp \psi^{(\nu)} \subset K$ for all $\nu \in \{0,1\}^d$.
\end{enumerate}
Assume that $1 \le p,q \le \infty$ and $|s| < R$. If $f$ is a distribution with the wavelet series
  \[f(x) = \sum_{k \in \integers^d} \beta(k) \phi(x - k) + \sum_{j=0}^\infty \sum_{\lambda \in \Lambda_j} \alpha(\lambda) \psi_\lambda(x),\]
 then $f$ belongs to the Besov space $B_{p,q}^s(\reals^d)$ if and only if $\beta(k) \in \ell^p(\integers)$ and
  \begin{equation}\label{eq:besovcond}
    A_j \coloneqq 2^{dj(1/2 - 1/p)}2^{js}\Big(\sum_{\lambda \in \Lambda_j} |\alpha(\lambda)|^p \Big)^{1/p} \in \ell^q(\naturalnumbers).
  \end{equation}
We also recall that due to orthonormality of the wavelet basis, a coefficient in the wavelet representation of $f$ is obtained simply as the $L^2$-inner product (more precisely, as the  distributional duality) between  $f$ and  the corresponding wavelet. 

\begin{proof}[Proof of Theorem~\ref{thm:besov}]
  Choose a wavelet basis $\phi$, $\psi_\lambda$ with $R > d$. Then these wavelets can be used to study the local regularity of $\mu$, as we know it belongs to $H_{loc}^{-d-\varepsilon}(U)$. It is enough to prove the regularity of $\eta \mu$ in any small open ball $B \subset U$ of positive radius $r_0<r_d $ , where $r_d$ is as in  Lemma~\ref{le:exact} and where $\eta \in C_c^\infty(B)$. Namely, then  regularity in a given compact set  $K\subset U$ follows as we may cover $K$  by finitely many balls  of radius $r_0$ and  apply  a suitable smooth partition of unity. Moreover, by Lemma~\ref{lemma:reduction} it is enough to consider the exactly scaling field $X^e$.
  Let $\beta(k)$ and $\alpha(\lambda)$ be the coefficients of $\eta\mu^e$ in its wavelet series, formally
  \[\eta(x)\mu^e(x) = \sum_{k \in \integers^d} \beta(k) \phi(x - k) + \sum_{j=0}^\infty \sum_{\lambda \in \Lambda_j} \alpha(\lambda) \psi_\lambda(x).\]
  Because $\phi$ has a compact support and $\mu^e$ is $0$ outside of $B$, by the definition of the wavelet coefficients and the compact support of $\phi$   $\beta(k)$ is nonzero only for finitely many $k$. Hence
  \[\sum_{k \in \integers^d} |\beta(k)|^p < \infty\]
  almost surely. It is thus enough to determine when \eqref{eq:besovcond} holds.

  Assume that $r \ge 1$ is such that $\E |\mu^e_\beta(f)|^r \le C \|f\|_\infty^r$ for any $f \in C_c(B)$, where $C < \infty$ is some constant. Let $\psi_\lambda$ and $\lambda = 2^{-j} k + 2^{-j-1}\nu$ be as in (iii) above. Fix $j_0 \ge 0$ so that $2^{-j_0} K \subset B$, where $K$ is the support of $\psi^{(\nu)}$, and assume furthermore that $j_0$ is  large enough so that for all $j \ge j_0$ it holds that if $\supp \psi_\lambda \cap \supp \eta \neq \emptyset$, then $\supp \psi_\lambda \subset B$. Then by using the local translational invariance of the law of $\mu^e$ and the scaling relation in Lemma~\ref{le:exact} we have
  \begin{align*}
    \E |\alpha(\lambda)|^r & = \E \left| \int_B \eta(x) \psi_\lambda(x) \, d\mu^e(x) \right|^r \\
                           & = \E \left| \int_B \eta(x) 2^{jd/2} \psi^{(\nu)}(2^j x - k) \, d\mu^e(x) \right|^r \\
                           & = \E \left| \int_{2^{-(j-j_0)}B} \eta(x + 2^{-j} k) 2^{jd/2} \psi^{(\nu)}(2^j x) \, d\mu^e(x) \right|^r \\
                           & = \E \left| 2^{-(j-j_0)d} e^{\beta \RV - \frac{\beta^2}{2} (j-j_0) \log(2)} \int_B \eta(2^{-(j-j_0)}x + 2^{-j} k) 2^{jd/2} \psi^{(\nu)}(2^{j_0} x) \, d\mu^e(x) \right|^r \\
                           & = 2^{-(j/2-j_0)dr} \E e^{r \Re(\beta) \RV - \frac{\Re(\beta)^2 - \Im(\beta)^2}{2} (j-j_0) \log(2) r} \\
    & \quad \times \E \left| \int_B \eta(2^{-(j-j_0)}x + 2^{-j} k) \psi^{(\nu)}(2^{j_0} x) \, d\mu^e(x) \right|^r \\
                           & = 2^{-(j/2-j_0)dr} 2^{\frac{r^2 \Re(\beta)^2}{2} (j-j_0) - \frac{\Re(\beta)^2 - \Im(\beta)^2}{2} (j-j_0) r} \\
    & \quad \times \E \left| \int_B \eta(2^{-(j-j_0)}x + 2^{-j} k) \psi^{(\nu)}(2^{j_0} x) \, d\mu^e(x) \right|^r \\
    & \le C 2^{j\big(\frac{r(r-1)\Re(\beta)^2}{2} + \frac{r \Im(\beta)^2}{2} - \frac{dr}{2}\big)} \|\eta\|_\infty^r \|\psi^{(\nu)}\|_\infty^r.
  \end{align*}
  Above $\RV$ was as in Lemma~\ref{le:exact}, i.e. $\RV \sim N(0, (j-j_0) \log(2))$.
  By subadditivity we have for $r \le p$ and $j \ge j_0$ that
  \begin{align*}
    \E A_j^r & \le 2^{djr(1/2 - 1/p)}2^{rjs} \sum_{\lambda \in \Lambda_j} \E |\alpha(\lambda)|^r \\
             & \le C 2^{djr(1/2 - 1/p)}2^{rjs} 2^{jd} 2^{j\big(\frac{r(r-1) \Re(\beta)^2}{2} + \frac{r \Im(\beta)^2}{2} - \frac{dr}{2}\big)} \|\eta\|_\infty^r \|\psi^{(\nu)}\|_\infty^r \\
    & = C 2^{jr\big(-\frac{d}{p} + s + \frac{d}{r} + \frac{(r-1) \Re(\beta)^2}{2} + \frac{\Im(\beta)^2}{2}\big)} \|\eta\|_\infty^r\|\psi^{(\nu)}\|_\infty^r.
  \end{align*}
  Let
  \[\gamma \coloneqq -\frac{d}{p} + s + \frac{d}{r} + \frac{(r-1) \Re(\beta)^2 + \Im(\beta)^2}{2}.\]
  If we have $\gamma < 0$, then it follows from Chebyshev's inequality and Borel--Cantelli lemma that $A_j \in \ell^q$ almost surely for any $q \ge 1$. One easily checks that the minimum of $\gamma$ with respect to $r$ is attained at $r = \frac{\sqrt{2d}}{|\Re(\beta)|}$, so we should choose
  \[r = \min\big(p, \frac{\sqrt{2d}}{|\Re(\beta)|}\big).\]
  To see that this choice is valid, we have to check the moment condition in Theorem~\ref{thm:moments}. We may assume that $\Re(\beta), \Im(\beta) \ge 0$ as the other cases are symmetric. First consider the case $\Re(\beta) < \sqrt{\frac{d}{2}}$. Then it is enough to check what happens when $\Im(\beta)^2 = d - \Re(\beta)^2$. In this case it is enough to check the non-strict inequality and the condition becomes
  \[(r-1)\Re(\beta)^2 + d - \Re(\beta)^2 - \frac{2d(r-1)}{r} \le 0,\]
  which one easily checks holds when
  \[2 \le r \le \frac{d}{\Re(\beta)^2},\]
  and in particular it holds when $r = \frac{\sqrt{2d}}{\Re(\beta)}$.
  In the second case $\sqrt{\frac{d}{2}} \le r < \sqrt{2d}$. Now the worst case is when $\Im(\beta) = \sqrt{2d} - \Re(\beta)$ and the condition becomes
  \[(r-1)\Re(\beta)^2 + (\sqrt{2d} - \Re(\beta))^2 - \frac{2d(r-1)}{r} \le 0,\]
  which is satisfied if and only if $r = \frac{\sqrt{2d}}{\Re(\beta)}$.

  The proof is finished by plugging $r = \min(p, \sqrt{2d}/{|\Re(\beta)|})$ into the formula for $\gamma$ and solving $\gamma < 0$ for $s$.
\end{proof}

\end{document}